
\documentclass[10pt,a4paper]{amsart}

\usepackage{pst-plot}
   \usepackage{pstricks}
   \usepackage{multido}
   \usepackage{pstricks-add} 

\usepackage{amsmath}
\usepackage{amsfonts}
\usepackage{color} 

     
\newtheorem{theo}{Theorem}[section] 
\newtheorem{prop}[theo]{Proposition}
\newtheorem{lem}[theo]{Lemma}

\begin{document}

\date{\today}

\title[Abelian  varieties]{The characteristic polynomials of abelian varieties of dimensions 3 over finite fields}
\author{Safia Haloui}
\address{Institut de Math\'ematiques de Luminy, Marseille, France}

\email{haloui@iml.univ-mrs.fr}

\subjclass[2000]{14G15, 11C08, 11G10, 11G25.}

\keywords{Abelian varieties over finite fields, Weil polynomials.}

\begin{abstract}
We describe the set  of characteristic polynomials of abelian varieties of dimension $3$ over finite fields.
\end{abstract}

\maketitle
\section{Introduction and results}
The isogeny class of an abelian variety over a finite field is determined by its characteristic polynomial (i.e. the characteristic polynomial of its  Frobenius endomorphism). We describe the set  of characteristic polynomials which occur in dimension $3$; this completes the work of Xing \cite{xing} (we will recall his results in this section). Since the problem has been solved in dimensions $1$ and $2$ (see \cite{water}, \cite{ruck} and \cite{mana}), it is sufficient to focus on simple abelian varieties.

Let $p(t)$ be the characteristic polynomial of an abelian variety of dimension $g$ over $\mathbb{F}_q$ (with $q=p^n$). Then the set of its roots has the form $\{\omega_1,\overline{\omega_1},\dots ,\omega_g, \overline{\omega_g}\}$ where the $\omega_i$'s  are $q$-Weil numbers. A monic polynomial with integer coefficients which satisfies this condition is called a \textit{Weil polynomial}. Thus every Weil polynomial of degree $3$ has the form
$$p(t)=t^6+a_1t^5+a_2t^4+a_3t^3+qa_2t^2+q^2a_1+q^3$$
for certains integers $a_1$, $a_2$ and $a_3$. The converse is false; indeed, since the absolute value of the roots of $p(t)$ is prescribed (equal to $\sqrt q$), its coefficients have to be bounded. Section 2 is dedicated to the proof of the following proposition:
\begin{theo}\label{coeffweil}
Let $p(t)=t^6+a_1t^5+a_2t^4+a_3t^3+qa_2t^2+q^2a_1t+q^3$ be a polynomial with integer coefficients. Then $p(t)$ is a Weil polynomial if and only if
  either
$$f(t)=(t^2-q)^2(t^2+\beta t+q)$$
where $\beta\in\mathbb{Z}$ and  $\vert\beta\vert <2\sqrt{q}$, or
 the following conditions hold
\begin{enumerate}
\item $\vert a_1\vert< 6\sqrt q$,
\item $4\sqrt q\vert a_1\vert -9q< a_2\leq\frac{a_1^2}{3}+3q$,
\item $-\frac{2a_1^3}{27}+\frac{a_1a_2}{3}+qa_1-\frac{2}{27}(a_1^2-3a_2+9q)^{3/2}\leq a_3\leq -\frac{2a_1^3}{27}+\frac{a_1a_2}{3}+qa_1+\frac{2}{27}(a_1^2-3a_2+9q)^{3/2}$,
\item $-2qa_1-2\sqrt qa_2-2q\sqrt q< a_3< -2qa_1+2\sqrt qa_2+2q\sqrt q.$
\end{enumerate}
\end{theo}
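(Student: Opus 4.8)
The plan is to run everything through the classical substitution $y=t+q/t$. Dividing $p(t)$ by $t^{3}$ and regrouping (using $t^{2}+q^{2}t^{-2}=y^{2}-2q$ and $t^{3}+q^{3}t^{-3}=y^{3}-3qy$), one gets $p(t)=t^{3}\,h(t+q/t)$ with
$$h(y)=y^{3}+a_{1}y^{2}+(a_{2}-3q)y+(a_{3}-2qa_{1})\in\mathbb{Z}[y].$$
The observation driving the argument is that, for $t\neq 0$, one has $|t|=\sqrt q$ if and only if $q/t=\overline t$, hence if and only if $y=t+q/t$ is real with $y=2\,\mathrm{Re}(t)\in[-2\sqrt q,2\sqrt q]$; conversely a real $y_{0}\in[-2\sqrt q,2\sqrt q]$ equals $t+q/t$ for the two roots (conjugate, or equal) of $t^{2}-y_{0}t+q$, each of modulus $\sqrt q$, whereas a real $y_{0}$ with $|y_{0}|>2\sqrt q$, or a non-real $y_{0}$, produces a root of modulus $\neq\sqrt q$. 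Hence $p$ is a Weil polynomial if and only if all three roots of $h$ are real and lie in $[-2\sqrt q,2\sqrt q]$.

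Next I would split off the configurations in which a root of $h$ sits at an endpoint $\pm 2\sqrt q$. If $2\sqrt q$ (say) is a root, write $h(y)=(y-2\sqrt q)(y^{2}+ey+f)$; matching coefficients and invoking $a_{1},a_{2},a_{3}\in\mathbb Z$ forces $e=a_{1}+2\sqrt q$ and $f=2\sqrt q\,a_{1}$, so $h(y)=(y^{2}-4q)(y+a_{1})$ and $p(t)=(t^{2}-q)^{2}(t^{2}+a_{1}t+q)$; such a $p$ is a Weil polynomial exactly when $|a_{1}|\le 2\sqrt q$, which is the exceptional family of the statement, and conversely any $p$ of that shape is plainly a Weil polynomial. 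This reduces matters to: assuming $h$ has no root at $\pm 2\sqrt q$, show that $p$ is a Weil polynomial if and only if $(1)$--$(4)$ hold.

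For the generic case I would encode ``all three roots of $h$ are real and lie in the open interval $(-2\sqrt q,2\sqrt q)$'' by elementary calculus on the monic cubic $h$. Reality of all three roots is equivalent to the discriminant of $h$ being non-negative; passing to depressed form via $y\mapsto y-a_{1}/3$ gives $z^{3}+Pz+Q$ with $-P=\tfrac13(a_{1}^{2}-3a_{2}+9q)$ and $Q=a_{3}-\bigl(-\tfrac{2a_{1}^{3}}{27}+\tfrac{a_{1}a_{2}}{3}+qa_{1}\bigr)$, and the inequality $-4P^{3}-27Q^{2}\ge 0$ becomes $|Q|\le\tfrac{2}{27}(a_{1}^{2}-3a_{2}+9q)^{3/2}$, that is, the upper bound in $(2)$ (needed for the radicand to be non-negative) together with $(3)$. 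Granting the roots are real, ``all roots $<2\sqrt q$'' is equivalent to $h(2\sqrt q)>0$, $h'(2\sqrt q)>0$, $h''(2\sqrt q)>0$ --- one direction from the factored forms of $h$, $h'$, $h''$ evaluated beyond the largest root, the other from $h(2\sqrt q+s)=h(2\sqrt q)+h'(2\sqrt q)s+\tfrac12 h''(2\sqrt q)s^{2}+s^{3}>0$ for $s\ge 0$ --- and symmetrically ``all roots $>-2\sqrt q$'' is equivalent to $h(-2\sqrt q)<0$, $h'(-2\sqrt q)>0$, $h''(-2\sqrt q)<0$. Now $h(\pm 2\sqrt q)=\pm 2q\sqrt q+2qa_{1}\pm 2\sqrt q\,a_{2}+a_{3}$, $h'(\pm 2\sqrt q)=9q\pm 4\sqrt q\,a_{1}+a_{2}$ and $h''(\pm 2\sqrt q)=2a_{1}\pm 12\sqrt q$, so these six inequalities are exactly condition $(4)$ (from the values of $h$), the lower bound in $(2)$ (from the values of $h'$), and condition $(1)$ (from the values of $h''$). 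Assembling, $(1)$--$(4)$ is equivalent to ``$h$ has three real roots, all lying in $(-2\sqrt q,2\sqrt q)$'', which combined with the endpoint analysis yields the theorem.

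The calculus dictionary above is routine; the delicate step --- and the one to treat with care --- is the endpoint case: checking that a root of $h$ at $\pm 2\sqrt q$, together with the integrality of $a_{1},a_{2},a_{3}$, truly forces $p$ into the exceptional shape, and keeping precise track of strict versus non-strict inequalities (especially when $q$ is a square) so that the open-interval conditions and the exceptional family between them account for every Weil polynomial of the prescribed form.
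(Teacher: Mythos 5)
Your reduction to the real Weil cubic $h(y)=y^3+a_1y^2+(a_2-3q)y+(a_3-2qa_1)$ and your handling of the generic case are correct, and they amount to the same computation as the paper's: the paper encodes ``all roots of $h$ real and lying in $(-2\sqrt q,2\sqrt q)$'' as ``the shifted cubics $\prod_i\bigl(t-(2\sqrt q\pm x_i)\bigr)$ are totally positive'' and applies Smyth's criterion, whose three conditions are exactly your discriminant inequality together with the sign conditions on $h$, $h'$, $h''$ at $\mp 2\sqrt q$. So the equivalence of (1)--(4) with ``three real roots in the open interval'' goes through either way, and the two write-ups differ only in packaging.

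The gap is in your endpoint analysis. Matching coefficients in $h(y)=(y-2\sqrt q)(y^2+ey+f)$ gives $e=a_1+2\sqrt q$ and $f=a_2+q+2\sqrt q\,a_1$, not $f=2\sqrt q\,a_1$; the identity $f=2\sqrt q\,a_1$ (equivalently, that $-2\sqrt q$ is also a root) follows from integrality only when $q$ is \emph{not} a square, via the minimal polynomial $y^2-4q$ of $2\sqrt q$ over $\mathbb Q$ --- a conjugation argument, not coefficient matching. When $q$ is a square the deduction genuinely fails: for $q=4$, take $h(y)=(y-4)\,y\,(y-1)$, i.e. $(a_1,a_2,a_3)=(-5,16,-40)$, giving $p(t)=(t-2)^2(t^2+4)(t^2-t+4)=t^6-5t^5+16t^4-40t^3+64t^2-80t+64$. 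All its roots have absolute value $2=\sqrt q$, so it is a Weil polynomial, yet it is not of the form $(t^2-q)^2(t^2+\beta t+q)$ and it violates the strict lower bound in (4) (here $a_3=-40=-2qa_1-2\sqrt q a_2-2q\sqrt q$). So your endpoint step cannot be repaired as stated; you correctly identified it as the delicate point, but the proof asserts a false forcing there. Note that this is also a soft spot of the paper itself: its proof takes the roots $2\sqrt q\pm x_i$ to be strictly positive and never actually derives the exceptional family, and the example above shows that when $q$ is a square the statement misses some boundary Weil polynomials (likewise $(t^2-q)^2(t\pm\sqrt q)^2$). A complete argument must split on whether $q$ is a square: if not, a root of $h$ at $\pm 2\sqrt q$ does force $h(y)=(y^2-4q)(y+a_1)$ and hence the exceptional shape; if so, the endpoint configurations have to be enumerated separately.
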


The Honda-Tate Theorem gives us a bijection between the set of conjugacy classes of $q$-Weil numbers and the set of isogeny classes of simple abelian varieties over $\mathbb{F}_q$. Moreover, the characteristic polynomial of a simple abelian variety of dimension $3$ over $\mathbb{F}_q$ has the form $p(t)=h(t)^e$ where $h(t)$ is an irreducible Weil polynomial and $e$ is an integer. Obviously $e$ must divide $6$. 

As remarked by Xing  \cite{xing}, $e$ cannot be equal to $2$ or $6$, otherwise $p(t)$ would have a real root and real  $q$-Weil numbers ($\pm\sqrt{q}$) correspond to dimension $1$ or $2$ (according to the parity of $n$) abelian varieties  (see \cite{water}).

When $e=3$, using a result from Maisner and Nart \cite[Proposition 2.5]{mana}, we get the following proposition, proved by Xing, which  gives us the form of $h(t)$.
\begin{prop}[Xing]\label{polye3}
Let $\beta\in\mathbb{Z}$, $\vert\beta\vert <2\sqrt{q}$. There exists a simple abelian variety of dimension $3$ over $\mathbb{F}_q$ with $h(t)=t^2+\beta t+q$ if and only if $3$ divides $n$ and $\beta =aq^{1/3}$, where $a$ is an integer coprime with $p$.
\end{prop}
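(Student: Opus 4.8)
The plan is to argue directly from the Honda--Tate parametrization. Since $|\beta|<2\sqrt q$, the polynomial $h(t)=t^2+\beta t+q$ has negative discriminant $\beta^2-4q$, so its roots $\pi,\bar\pi$ are complex-conjugate $q$-Weil numbers, $h$ is irreducible, and $K:=\mathbb{Q}(\pi)$ is an imaginary quadratic field; in particular $K$ has no real places. By Honda--Tate there is, up to isogeny, a unique simple abelian variety $A/\mathbb{F}_q$ with Frobenius $\pi$; its characteristic polynomial is $h(t)^e$ and $2\dim A=e\,[K:\mathbb{Q}]=2e$. Hence the existence of a simple abelian variety of dimension $3$ with minimal polynomial $h$ is \emph{equivalent} to $e=3$. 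Recall that $e$ is the least common multiple of the denominators, written in lowest terms, of the local invariants of $D=\operatorname{End}^0(A)$, and that the only invariants which can be nonzero lie at the places $v$ of $K$ above $p$, where $\operatorname{inv}_v(D)=\frac{v(\pi)}{v(q)}\,[K_v:\mathbb{Q}_p]\bmod 1$, with $v$ normalized so that $v(K_v^\times)=\mathbb{Z}$. So the whole statement reduces to a computation of this local data.

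For the ``only if'' direction, first observe that $p$ cannot be inert or ramified in $K$: in either case there is a single place $v$ over $p$, complex conjugation fixes it, and $\pi\bar\pi=q$ forces $v(\pi)=\tfrac12 v(q)$, which together with $[K_v:\mathbb{Q}_p]=2$ makes $\operatorname{inv}_v(D)\equiv 0$; then $e=1$ and $\dim A=1$, contradicting $\dim A=3$. So $p$ splits; let $v_1,v_2$ be the two places of $K$ above $p$, normalized by $v_i(p)=1$, and note that complex conjugation interchanges them. Setting $r=v_1(\pi)$, we get $v_1(\bar\pi)=v_2(\pi)$, and $\pi\bar\pi=q$ gives $v_2(\pi)=n-r$; the two invariants are then $\pm r/n\bmod 1$, so $e=n/\gcd(r,n)$. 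Imposing $e=3$ forces $3\mid n$ and $r\in\{n/3,\,2n/3\}$. Finally $\beta=-(\pi+\bar\pi)$, and the two summands have $v_1$-valuations $n/3$ and $2n/3$ in some order; since $n/3\neq 2n/3$, the valuation of the sum is \emph{exactly} $n/3$, and as $\beta\in\mathbb{Z}$ this says $v_p(\beta)=n/3$, i.e. $\beta=a\,q^{1/3}$ with $\gcd(a,p)=1$.

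Conversely, suppose $3\mid n$, $\beta=a\,q^{1/3}$ with $\gcd(a,p)=1$, and $|\beta|<2\sqrt q$. Then $h$ is irreducible and $K=\mathbb{Q}\big(\sqrt{a^2-4p^{n/3}}\big)$ is imaginary quadratic; one checks $p$ splits in $K$, since for odd $p$ the quantity $a^2-4p^{n/3}$ is congruent to $a^2$, a nonzero square mod $p$, while for $p=2$ it is $\equiv 1\pmod 8$. Writing $\pi=p^{n/3}\zeta$ with $\zeta$ a root of $X^2+aX+p^{n/3}$, the relation $\zeta+\bar\zeta=-a$ together with $\gcd(a,p)=1$ forces one of $v_1(\zeta),v_2(\zeta)$ to vanish, whereupon $\zeta\bar\zeta=p^{n/3}$ pins down $\{v_1(\pi),v_2(\pi)\}=\{n/3,\,2n/3\}$; the invariants of $D$ are then $\{1/3,\,2/3\}$, so $e=3$ and the Honda--Tate abelian variety attached to $\pi$ has dimension $3$ with minimal polynomial $h$. (Alternatively, the local computation in the split case can be quoted directly from Maisner--Nart \cite[Proposition 2.5]{mana}.) The steps I expect to cost the most effort are the two ``exactness'' points --- that $p$ must split (ruling out the inert and ramified cases, whose single invariant is automatically integral) and that $v_p(\beta)$ equals $n/3$ on the nose --- together with the small, case-dependent verification (odd $p$ versus $p=2$) that $p$ splits in $K$ in the converse.
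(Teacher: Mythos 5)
Your argument is correct, and it is genuinely more self-contained than what the paper does: the paper offers no proof of this proposition at all, simply attributing it to Xing and citing Maisner--Nart \cite[Proposition 2.5]{mana}, which is precisely the local computation (the value of $e$ for a quadratic Weil number $t^2+\beta t+q$ in terms of $v_p(\beta)$) that you carry out from scratch. Your route --- reduce to $e=3$ via $2\dim A=e[K:\mathbb{Q}]$, dispose of the inert and ramified cases because the single invariant $\tfrac{v(\pi)}{v(q)}[K_v:\mathbb{Q}_p]$ is then integral, and in the split case read off $e=n/\gcd(r,n)$ from the pair of invariants $\pm r/n$ --- is the standard Honda--Tate/Tate computation and all the delicate points are handled: the passage from $\{v_1(\pi),v_2(\pi)\}=\{n/3,2n/3\}$ to $v_p(\beta)=n/3$ correctly uses that the two valuations are distinct so the ultrametric inequality is an equality, and the converse correctly verifies that $p$ splits in $\mathbb{Q}(\sqrt{a^2-4p^{n/3}})$ (including the $p=2$ case via the residue mod $8$) and that exactly one of $v_i(\zeta)$ vanishes because $\zeta+\bar\zeta=-a$ is a unit at $p$. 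What the citation-based approach buys is brevity and consistency with the surface case treated in \cite{mana}; what your approach buys is a complete, checkable proof that makes visible exactly where the conditions $3\mid n$ and $p\nmid a$ come from. The only cosmetic remark is that you could note explicitly that $r\in\{0,n\}$ (an ordinary elliptic curve) is excluded by $\gcd(r,n)=n\neq n/3$, which you implicitly do when listing the multiples of $n/3$ in $[0,n]$ with the right gcd.
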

It follows that a simple abelian variety of dimension $3$ with a reducible characteristic polynomial has $p$-rank $0$; this fact was proved by  Gonz\'alez   \cite{gonz}. Note that the Newton polygon of a polynomial from Proposition \ref{polye3} is of type $1/3$ (see Figure \ref{type13}, Section 4).

It remains to see what happens when $p(t)$ is irreducible ($e=1$). First, we need an irreducibility criterion for Weil polynomials. In section 3, we prove the following proposition:
\begin{prop}\label{irred}
Set
$$r=-\frac{a_1^2}{3}+a_2-3q\quad\mbox{ and }\quad s=\frac{2a_1^3}{27}-\frac{a_1a_2}{3}-qa_1+a_3$$
and 
$$\Delta=s^2-\frac{4}{27}r^3\quad\mbox{ and }\quad u=\frac{-s+\sqrt{\Delta}}{2}\cdot$$
Then $p(t)$ is irreducible over $\mathbb{Q}$ if and only if $\Delta\neq 0$ and $u$ is not a cube in $\mathbb{Q}(\sqrt{\Delta})$.
\end{prop}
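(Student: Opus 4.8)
The plan is to reduce the degree-$6$ problem to an irreducibility question for an auxiliary cubic, and then to apply Cardano's criterion to that cubic. Since $p(t)$ is a Weil polynomial it satisfies $q^{3}p(t)=t^{6}p(q/t)$, so setting $y=t+q/t$ and using $t^{2}+q^{2}/t^{2}=y^{2}-2q$ and $t^{3}+q^{3}/t^{3}=y^{3}-3qy$, one checks by expanding that
\[
p(t)=t^{3}\,\widetilde{h}\!\left(t+\tfrac{q}{t}\right),\qquad \widetilde{h}(y)=y^{3}+a_{1}y^{2}+(a_{2}-3q)\,y+(a_{3}-2qa_{1}).
\]
The shift $y=x-a_{1}/3$ carries $\widetilde{h}$ to the depressed cubic $h(x)=x^{3}+rx+s$, with $r,s$ as in the statement, and $\Delta$ and $u$ are then the quantities occurring in Cardano's solution of $h(x)=0$; put $F=\mathbb{Q}(\sqrt{\Delta})$, so $u\in F$. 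It suffices to prove: (a) $p$ is irreducible over $\mathbb{Q}$ if and only if $h$ (equivalently $\widetilde{h}$) is; and (b) $h$ is irreducible over $\mathbb{Q}$ if and only if $\Delta\neq 0$ and $u$ is not a cube in $F$.

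For (a), one direction is immediate: if $\widetilde{h}(y_{0})=0$ with $y_{0}\in\mathbb{Q}$, then the roots of $t^{2}-y_{0}t+q$ are nonzero and satisfy $\omega+q/\omega=y_{0}$, so $p(\omega)=\omega^{3}\widetilde{h}(y_{0})=0$ and $t^{2}-y_{0}t+q$ divides $p$. For the converse I would use that, $p$ being a Weil polynomial, complex conjugation acts on its roots by $\omega\mapsto q/\omega$; hence every monic irreducible factor $m\in\mathbb{Q}[t]$ of $p$ is stable under $t\mapsto q/t$. Assume $p$ reducible and take such an $m$ with $\deg m\leq 5$. An involution of a set of odd cardinality has a fixed point, so if $\deg m$ is odd then $m$ has a root $\omega$ with $\omega^{2}=q$, i.e.\ $\omega=\pm\sqrt{q}$, which forces $\deg m\leq 2$; thus $\deg m\in\{1,2,4\}$. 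If $\deg m\leq 2$, then $m$ is one of $t\mp\sqrt{q}$, $t^{2}-q$, or $t^{2}-y_{0}t+q$ with $y_{0}\in\mathbb{Q}$, and substituting a root of $m$ into $p=t^{3}\widetilde{h}(t+q/t)$ gives $\widetilde{h}(2\sqrt{q})=0$ or $\widetilde{h}(y_{0})=0$, so $\widetilde{h}$ has a proper factor over $\mathbb{Q}$ (one has $y^{2}-4q\mid\widetilde{h}$ when $2\sqrt{q}\notin\mathbb{Q}$). If $\deg m=4$, a root $\omega$ of $m$ is then $\neq\pm\sqrt{q}$, so $y:=\omega+q/\omega$ is a root of $\widetilde{h}$ with $\mathbb{Q}(y)\subseteq\mathbb{Q}(\omega)$ and $[\mathbb{Q}(\omega):\mathbb{Q}(y)]\leq 2$; thus $[\mathbb{Q}(y):\mathbb{Q}]$ divides $4$, and being $\leq 3$ (as $y$ is a root of the cubic $\widetilde{h}$) it is $1$ or $2$, so the minimal polynomial of $y$ is a proper factor of $\widetilde{h}$ over $\mathbb{Q}$. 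In every case $\widetilde{h}$, hence $h$, is reducible; this is the contrapositive of what was wanted.

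For (b), a cubic over $\mathbb{Q}$ is reducible precisely when it has a rational root. If $\Delta=0$, then $h$ has a multiple root, which is rational ($=-3s/(2r)$, or $0$ when $r=s=0$), so $h$ is reducible; this is why $\Delta\neq 0$ must be imposed. Suppose $\Delta\neq 0$. By Cardano's formula the roots of $h$ are $C-\tfrac{r}{3C}$ as $C$ ranges over the three cube roots of $u$. If $u$ is a cube in $F$, pick $C\in F$; then $C-\tfrac{r}{3C}\in F$ is a root of $h$, which is impossible for an irreducible cubic since $[F:\mathbb{Q}]\leq 2$. Conversely, if $\rho\in\mathbb{Q}$ is a root of $h$, the cube root $C$ of $u$ producing it satisfies $3C^{2}-3\rho C-r=0$, hence $C=\tfrac{3\rho\pm\sqrt{9\rho^{2}+12r}}{6}$; since $9\rho^{2}+12r$ and $\Delta$ differ by a nonzero rational square ($3\rho^{2}+r\neq 0$ because $\Delta\neq 0$), we get $\mathbb{Q}(\sqrt{9\rho^{2}+12r})=F$, so $C\in F$ and $u=C^{3}$ is a cube in $F$. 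Therefore $h$ is irreducible exactly when $\Delta\neq 0$ and $u$ is not a cube in $F$; together with (a) this is the proposition.

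The delicate point is the converse half of (a): for a polynomial of the prescribed shape that is \emph{not} a Weil polynomial one can have $p$ reducible while $\widetilde{h}$ is irreducible, so the argument must genuinely use that complex conjugation acts by $\omega\mapsto q/\omega$, and must dispose carefully of the odd-degree irreducible factors and of the two exceptional roots $\pm\sqrt{q}$. In (b), the only thing requiring care is checking that $9\rho^{2}+12r$ generates the same quadratic field as $\Delta$, which is where $\Delta\neq 0$ re-enters.
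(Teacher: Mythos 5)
Your proof is correct and follows the paper's overall strategy --- pass to the ``trace'' cubic $\widetilde{h}$ (which is exactly the paper's real Weil polynomial $f(t)=\prod_i(t+x_i)$) and then apply Cardano --- but both halves are argued quite differently. For step (a) the paper proves a separate proposition: a reducible Weil polynomial of degree $2g$ factors as a product of two Weil polynomials (splitting off the powers of $t\mp\sqrt{q}$ by an explicit case discussion on their multiplicities), and $f$ then factors accordingly. You instead analyze an irreducible factor $m$ of $p$ directly, using that its root set is stable under $\omega\mapsto q/\omega=\overline{\omega}$, ruling out odd degrees $\geq 3$ by the fixed-point argument and handling the degree-$4$ case by a degree count on $\mathbb{Q}(\omega)\supseteq\mathbb{Q}(\omega+q/\omega)$; this is sound and avoids the paper's bookkeeping, at the cost of the extra degree-$4$ case. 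For step (b) the paper proves the converse (rational root $\Rightarrow$ $u$ a cube in $\mathbb{Q}(\sqrt{\Delta})$) by writing a cube root of $u$ as $a+ib$ with $a\in\mathbb{Q}$ and comparing real and imaginary parts, with a separate subcase $a=0$; your route through the quadratic $3C^2-3\rho C-r=0$ and the square-class identity $9\rho^2+12r=\Delta\cdot\bigl(9/(3\rho^2+r)\bigr)^2$ is cleaner and uniform. One point you should make explicit: your computations are valid for the genuine Cardano quantity $\Delta=s^2+\tfrac{4}{27}r^3=-\tfrac{1}{27}\,\mathrm{disc}(h)$, whereas the statement (and the paper) print $\Delta=s^2-\tfrac{4}{27}r^3$. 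The printed sign is a typo: for a totally real depressed cubic one has $r\leq 0$, so $s^2-\tfrac{4}{27}r^3\geq 0$, contradicting the paper's own assertion that $\Delta\leq 0$; likewise Cardano requires $uu'=(-r/3)^3$, i.e.\ $s^2-\Delta=-\tfrac{4}{27}r^3$. Your argument silently uses the corrected $\Delta$ (with the printed one, your identity for $9\rho^2+12r$ would fail), so either state the correction or verify that your $\Delta$ is the one intended.
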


Next, we determine the possible Newton polygons for $p(t)$; this is the aim of Section 4.
\begin{theo}\label{newt} Let $p(t)=t^6+a_1t^5+a_2t^4+a_3t^3+qa_2t^2+q^2a_1+q^3$ be an irreducible Weil polynomial. Then $p(t)$ is the characteristic polynomial of an abelian variety of dimension $3$ if and only if one of the following conditions holds
\begin{enumerate}
\item $v_p(a_3)=0$,
\item $v_p(a_2)=0$, $v_p(a_3)\geq n/2$ and $p(t)$ has no root of valuation $n/2$ in $\mathbb{Q}_p$,
\item $v_p(a_1)=0$ $v_p(a_2)\geq n/2$, $v_p(a_3)\geq n$ and $p(t)$ has no root of valuation $n/2$ in $\mathbb{Q}_p$,
\item $v_p(a_1)\geq n/3$, $v_p(a_2)\geq 2n/3$, $v_p(a_3)=n$ and $p(t)$ has no root $\mathbb{Q}_p$,
\item $v_p(a_1)\geq n/2$, $v_p(a_2)\geq n$, $v_p(a_3)\geq 3n/2$ and $p(t)$ has no root nor factor of degree $3$ in $\mathbb{Q}_p$.
\end{enumerate}
The $p$-ranks of abelian varieties in cases 1, 2, 3, 4 and 5 are respectively  $3$, $2$, $1$, $0$ and $0$. The abelian varieties in case 5 are supersingular.
\end{theo}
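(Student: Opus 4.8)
The plan is to run the isogeny problem through Honda--Tate theory and read everything off the Newton polygon of $p(t)$ at $p$. Since $p(t)$ is irreducible of degree $6$ it has no real root (the only real $q$-Weil numbers, $\pm\sqrt q$, have minimal polynomial of degree $\le 2$), so $K=\mathbb{Q}[t]/(p(t))$ is a CM field of degree $6$ and in particular has no real places. Let $\pi$ be a root of $p(t)$ and $A$ the simple abelian variety over $\mathbb{F}_q$ attached to $\pi$ by Honda--Tate, so that $\mathrm{End}^0(A)$ is a central division algebra over $K$ whose local invariants vanish away from $p$ and equal $\mathrm{inv}_v=\dfrac{v(\pi)}{v(q)}\,[K_v:\mathbb{Q}_p]\pmod 1$ at each $v\mid p$. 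Writing $p(t)^e$ for the characteristic polynomial of $A$, one has $2\dim A=e\deg p=6e$, and $e$ is the least common denominator of the $\mathrm{inv}_v$. Hence $p(t)$ is the characteristic polynomial of an abelian variety of dimension $3$ if and only if $e=1$, i.e. if and only if $\mathrm{inv}_v\in\mathbb{Z}$ for every $v\mid p$.

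I would then phrase this in Newton-polygon language. Factoring $p=\prod_i g_i$ over $\mathbb{Q}_p$, the $g_i$ correspond to the places $v_i\mid p$, with $\deg g_i=[K_{v_i}:\mathbb{Q}_p]$ and $v_i(\pi)$ equal to the unique slope of $g_i$ (normalizing $v$ so that $v(q)=n$), so that $\mathrm{inv}_{v_i}=\frac{v_i(\pi)\deg g_i}{n}\pmod 1$. The shape of $p(t)$ (the functional equation $p(t)=q^{-3}t^{6}p(q/t)$) forces the Newton polygon to be symmetric, slopes $\lambda$ and $n-\lambda$ occurring with equal multiplicity, and the $p$-rank of $A$ equals the number of roots $\pi$ with $v(\pi)=0$. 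Using that a slope carrying $\ell$ roots of a single irreducible factor of ramification $e'$ (with $e'\mid\ell$) must lie in $\frac1{e'}\mathbb{Z}$, I would enumerate the symmetric Newton polygons that can occur for such a degree-$6$ Weil polynomial; this already discards several a priori admissible shapes, for instance the one with slopes $0,\tfrac n3,\tfrac n3,\tfrac{2n}3,\tfrac{2n}3,n$, whose slope-$\tfrac n3$ part has length $2$ and hence either cannot occur or does so only with non-integral invariant.

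For each surviving shape I would do two things. First, translate ``$p(t)$ has this Newton polygon'' into conditions on $v_p(a_1),v_p(a_2),v_p(a_3)$ by a short lower-convex-hull computation, shortened by the symmetry; this should reproduce exactly conditions $1$--$5$ together with the $p$-ranks $3,2,1,0,0$, and in case $5$ one reads off that all slopes equal $n/2$, so $A$ is supersingular. Second, among the factorization types $\prod g_i$ compatible with a given shape, decide which make every $\mathrm{inv}_{v_i}$ integral: a linear factor on slope $n/2$ or $n/3$ gives invariant $1/2$ or $1/3$, a degree-$3$ factor on slope $n/2$ gives invariant $3/2$, and a linear factor on a non-integral slope cannot exist; eliminating the bad types yields precisely ``$p(t)$ has no root of valuation $n/2$ in $\mathbb{Q}_p$'' in cases $2$ and $3$, ``no root in $\mathbb{Q}_p$'' in case $4$, and ``no root nor factor of degree $3$ in $\mathbb{Q}_p$'' in case $5$, whereas in case $1$ every local invariant is automatically integral (valuation-$0$ places contribute $0$, valuation-$n$ places contribute $\deg g_i\in\mathbb{Z}$), so no extra condition is required.

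The main obstacle will be the bookkeeping in this last step: proving both implications --- that each excluded factorization really forces $e>1$, and that every admissible one gives $e=1$ --- while keeping track of the parity of $n$ and divisibility by $2$ and $3$, which govern whether a given slope can be realized by an unramified, ramified or split local factor. One must also check that the inequalities produced pin down the Newton polygon uniquely (larger valuations never alter the shape) and that the shapes absent from the list satisfy none of conditions $1$--$5$, so that the stated equivalence is sharp.
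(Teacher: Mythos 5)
Your proposal follows essentially the same route as the paper: both reduce the problem to the Milne--Waterhouse/Honda--Tate criterion that $e$, the least common denominator of the local invariants $v(\pi)\deg g_i/n = v_p\bigl(g_i(0)\bigr)/n$ taken over the irreducible factors $g_i$ of $p(t)$ in $\mathbb{Q}_p[t]$, be equal to $1$, then enumerate the admissible symmetric Newton polygons, translate each shape into valuation conditions on $a_1,a_2,a_3$, and convert integrality of the invariants into the stated factorization conditions (no root of valuation $n/2$, no root, no root nor cubic factor). The paper carries out exactly the case analysis you outline, including reading the $p$-rank off the number of slope-$0$ roots and identifying case 5 as the all-slopes-$n/2$ (supersingular) polygon, so there is no gap to flag.
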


It is possible to make condition (5) of Proposition \ref{newt} more explicit. Indeed, in \cite{nr}, Nart and Ritzenthaler gave the list of supersingular $q$-Weil numbers of degree $6$. We derive from it the following proposition (see Section 5).
\begin{prop}\label{supers} If $p(t)$ is the characteristic polynomial of a supersingular abelian variety of dimension $3$ then one of the following conditions holds
\begin{enumerate}
\item $(a_1,a_2,a_3)=(q^{1/2},q,q^{3/2})$ or $(-q^{1/2},q,-q^{3/2})$, $q$ is a square and $7\not\vert (p^3-1)$,
\item $(a_1,a_2,a_3)=(0,0,q^{3/2})$ or $(0,0,-q^{3/2})$, $q$ is a square and $3\not\vert (p-1)$, 
\item $(a_1,a_2,a_3)=(\sqrt{pq},3q,q\sqrt{pq})$ or $(-\sqrt{pq},3q,-q\sqrt{pq})$, $p=7$ and $q$ is not a square, 
\item $(a_1,a_2,a_3)=(0,0,q\sqrt{pq})$ or $(0,0,-q\sqrt{pq})$, $p=3$ and $q$ is not a square.
\end{enumerate}
\end{prop}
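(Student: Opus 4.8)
The proof is essentially a translation of the list of supersingular $q$-Weil numbers of degree $6$ obtained by Nart and Ritzenthaler \cite{nr} into the coordinates $(a_1,a_2,a_3)$. The plan is: recall the shape of that list; for each entry write down the characteristic polynomial, which — since $p(t)$ is irreducible by Theorem \ref{newt} (case $5$) — is simply the minimal polynomial of the Weil number, a monic degree $6$ polynomial; and then simplify. For the first step, recall that a supersingular Weil number $\pi$ satisfies $\pi^2=q\zeta$ for a root of unity $\zeta$ of order prime to $p$: the element $\pi^2/q$ is a unit at every finite place and has absolute value $1$ at every archimedean place, hence is a root of unity by Kronecker's theorem. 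As $\pi$ is a root of $x^2-q\zeta$ over $\mathbb Q(\zeta)$ we have $[\mathbb Q(\pi):\mathbb Q(\zeta)]\le 2$, and because $[\mathbb Q(\pi):\mathbb Q]=6$ while $\varphi(m)=3$ has no solution, this forces $\mathbb Q(\pi)=\mathbb Q(\zeta)$; hence $\zeta$ is a primitive $m$-th root of unity with $\varphi(m)=6$, i.e. $m\in\{7,9,14,18\}$, $\pi\in\mathbb Q(\zeta_m)$, and $q\zeta_m$ is a square in $\mathbb Q(\zeta_m)$. I would then distinguish whether $q$ is a square.

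If $q$ is a square, then $q\zeta_m$ can be a square in $\mathbb Q(\zeta_m)$ only for $m\in\{7,9\}$ (for even $m$ it would force $-1$ to be a square in $\mathbb Q(\zeta_7)$ or $\mathbb Q(\zeta_9)$), so $\pi=\pm\sqrt q\,\zeta$ with $\zeta$ a primitive $7$-th or $9$-th root of unity. The characteristic polynomial of $\pi$ is $q^3\Phi_m(t/\sqrt q)$, and, since $\Phi_{14}(t)=\Phi_7(-t)$ and $\Phi_{18}(t)=\Phi_9(-t)$, that of $-\pi$ is $q^3\Phi_{14}(t/\sqrt q)$ resp. $q^3\Phi_{18}(t/\sqrt q)$; expanding these four polynomials yields exactly the two polynomials of case $1$ and the two of case $2$. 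The condition on $p$ arises because $\pi$ defines a simple abelian variety of dimension $3$ — rather than $6$ — precisely when the local degrees $[\mathbb Q(\zeta_m)_v:\mathbb Q_p]$ are all even, so that the isogeny class carries no nontrivial division algebra; equivalently when the order of $p$ modulo $m$ is even, or $p\mid m$, and a short computation rewrites this as $7\not\vert\,(p^3-1)$ for $m\in\{7,14\}$ and as $3\not\vert\,(p-1)$ for $m\in\{9,18\}$.

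If $q$ is not a square, write $q=p^n$ with $n$ odd. Then, $m$ odd again being excluded, $q\zeta_m$ being a square in $\mathbb Q(\zeta_m)$ amounts to $-p$ being a square there, and since $\mathbb Q(\sqrt{-7})$ is the unique quadratic subfield of $\mathbb Q(\zeta_7)$ and $\mathbb Q(\sqrt{-3})$ that of $\mathbb Q(\zeta_9)$, this can happen only for $(p,m)=(7,14)$ and $(p,m)=(3,18)$. In these cases $\pi=\pm\sqrt{-p}\,p^{(n-1)/2}\,\zeta$ for a suitable primitive $7$-th (resp. $9$-th) root of unity $\zeta$; since all Galois conjugates of $\pi$ have absolute value $\sqrt q$, pairing each with its complex conjugate factors the characteristic polynomial as $\prod_{a}\bigl(t^2+b_a\,t+q\bigr)$, and a short computation (using $\sqrt{-p}\,p^{(n-1)/2}=\pm i\sqrt q$) gives $b_a=\pm 2\sqrt q\sin(k\pi/7)$ (resp. $\pm 2\sqrt q\sin(k\pi/9)$). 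Expanding this product and invoking the classical identities $\sin\frac{2\pi}{7}+\sin\frac{4\pi}{7}-\sin\frac{\pi}{7}=\frac{\sqrt7}{2}$ and $\sin\frac{\pi}{7}\sin\frac{2\pi}{7}\sin\frac{3\pi}{7}=\frac{\sqrt7}{8}$ (and the analogous identities with $9$ in place of $7$) collapses the elementary symmetric functions of the $b_a$ to the constants in the statement, giving case $3$ for $p=7$ and case $4$ for $p=3$, the two signs of $\pi$ producing the two polynomials of each case.

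The part I expect to be the real work is this last one: on the one hand making sure that among all primes only $p=3$ and $p=7$ admit a degree $6$ supersingular Weil number when $q$ is not a square, and on the other the cyclotomic/trigonometric bookkeeping needed to identify the symmetric functions of the $b_a$ with the stated closed forms. The $q$-square cases, by contrast, reduce to reading off the coefficients of $q^3\Phi_m(t/\sqrt q)$ and to the standard description of the decomposition of $p$ in $\mathbb Q(\zeta_m)$, and should go quickly.
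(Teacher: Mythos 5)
Your proposal is correct and rests on the same foundation as the paper: both take the Nart--Ritzenthaler list of supersingular $q$-Weil numbers of degree $6$ as the starting point and translate each entry into the coefficients $(a_1,a_2,a_3)$. You deviate in two places. First, you re-derive the classification itself (Kronecker's theorem applied to $\pi^2/q$, the constraint that $q\zeta_m$ be a square in $\mathbb{Q}(\zeta_m)$ with $\varphi(m)=6$, and the quadratic subfields $\mathbb{Q}(\sqrt{-7})$, $\mathbb{Q}(\sqrt{-3})$ to isolate $p=7$ and $p=3$ when $q$ is not a square), whereas the paper simply cites \cite{nr}; this is sound, except that your parenthetical claim that $\zeta=\pi^2/q$ has order prime to $p$ is false (for $p=7$, $q$ a square, $\pi=\sqrt q\zeta_7$ gives $\zeta=\zeta_7^2$ of order $7$) -- harmlessly so, since you never use it. Second, for $q$ not a square you obtain the two sextic factors by pairing conjugate roots and evaluating the symmetric functions of $2\sqrt q\sin(2k\pi/7)$ via classical trigonometric identities, whereas the paper factors $q^6\phi_{28}(t/\sqrt q)$ (resp.\ $q^6\phi_{36}(t/\sqrt q)$) directly into the two displayed sextics; your identities do check out (the second symmetric function of the three sines vanishes, giving $a_2=3q$), but note that for $m=36$ the needed identities are not literally ``analogous'': there one needs $\sum_a\sin\theta_a=0$ and $\sum_{a<b}\sin\theta_a\sin\theta_b=-3/4$, yielding $(0,0,\pm q\sqrt{3q})$, so that step should be written out rather than inferred from the $7$-case. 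Finally, your derivation of $7\nmid(p^3-1)$ and $3\nmid(p-1)$ from the parity of the local degrees $[\mathbb{Q}(\zeta_m)_v:\mathbb{Q}_p]$ is equivalent to the paper's criterion (``no factor of degree $1$ or $3$ over $\mathbb{Q}_p$'', i.e.\ case 5 of Theorem \ref{newt}), just phrased through the Brauer invariants in Honda--Tate rather than through the factorization of $p(t)$ over $\mathbb{Q}_p$.
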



\section{The coefficients of Weil polynomials of degree $6$}
In order to prove Theorem \ref{coeffweil}, we use Robinson's method (described by Smyth in \cite[\S 2, Lemma]{smyth}). Fixing a polynomial of degree $3$ and doing an explicit calculation we get the following lemma:
\begin{lem}\label{coeff}
Let $f(t)=t^3+r_1t^2+r_2t+r_3$ be a monic polynomial of degree $3$ with real coefficients. Then $f(t)$ has all real positive roots if and only if the following conditions hold:
\begin{enumerate}
\item $r_1< 0$,
\item $0< r_2\leq \frac{r_1^2}{3}$,
\item $\frac{r_1r_2}{3}-\frac{2r_1^3}{27}-\frac{2}{27}(r_1^2-3r_2)^{3/2}\leq r_3\leq\frac{r_1r_2}{3}-\frac{2r_1^3}{27}+\frac{2}{27}(r_1^2-3r_2)^{3/2}$ and $r_3< 0$.
\end{enumerate}
\end{lem}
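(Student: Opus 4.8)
The plan is to separate the two requirements ``all roots real'' and ``all roots positive'', and to treat the first part by a parametric study of the cubic in which the constant term is the free parameter. Write $g(t)=t^3+r_1t^2+r_2t$, so that $f(t)=g(t)+r_3$ and the roots of $f$ are the solutions of $g(t)=-r_3$. Since $g$ has positive leading coefficient, for a given real $c$ the equation $g(t)=c$ has three real solutions (counted with multiplicity) exactly when $g$ has a local maximum and a local minimum and $c$ lies between the two critical values; this forces $g'(t)=3t^2+2r_1t+r_2$ to have (at least) a double real root, i.e. $r_1^2-3r_2\ge 0$. When $r_1^2-3r_2<0$ one has $g'>0$ everywhere, so $f$ is strictly increasing and has a single real root, hence $f$ cannot have all real roots; this is what produces the bound $r_2\le r_1^2/3$.

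Next I would carry out the explicit computation of the two critical values. The critical points are $t_\pm=\frac{-r_1\pm\sqrt{r_1^2-3r_2}}{3}$; to evaluate $g(t_\pm)$ I would use $g'(t_\pm)=0$, i.e. $3t_\pm^2=-2r_1t_\pm-r_2$, to reduce $g(t_\pm)$ to a linear expression in $t_\pm$, and then substitute. Writing $\delta=\sqrt{r_1^2-3r_2}$, the simplification yields
$$g(t_\pm)=\frac{2r_1^3-9r_1r_2\mp 2\delta^3}{27},$$
with $g(t_-)\ge g(t_+)$, so that $t_-$ is the local maximum. Therefore $f$ has three real roots if and only if $r_1^2-3r_2\ge 0$ and $g(t_+)\le -r_3\le g(t_-)$, and multiplying through by $-1$ turns this into exactly the two-sided inequality of condition (3), with the convention $\delta^3=(r_1^2-3r_2)^{3/2}$. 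The boundary case $\delta=0$, where the two-sided inequality collapses to an equality, corresponds to $f$ having a triple root $t=-r_1/3$, and I would check separately that it fits.

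It then remains to pass from ``all roots real'' to ``all roots positive'', and here is where $r_1<0$, $r_2>0$ and $r_3<0$ come in. For the forward implication, if the roots $x_1,x_2,x_3$ are positive then Vieta's formulas give $r_1=-(x_1+x_2+x_3)<0$, $r_2=\sum_{i<j}x_ix_j>0$, $r_3=-x_1x_2x_3<0$, and $r_1^2-3r_2=\tfrac12\sum_{i<j}(x_i-x_j)^2\ge 0$, which together with the second paragraph gives (1), (2) and (3). Conversely, assume (1)--(3); by the second paragraph $f$ has three real roots $x_1,x_2,x_3$, and $-f(-t)=t^3-r_1t^2+r_2t-r_3$ has roots $-x_1,-x_2,-x_3$ and, under (1)--(3), strictly positive coefficients, so Descartes' rule of signs shows it has no positive real root; hence every $x_i\le 0$, and since $r_3=-x_1x_2x_3<0$ excludes a zero root, all $x_i>0$. (Alternatively one can rule out the ``two negative roots'' configuration directly from $r_1<0$, $r_2>0$ and the sign of $f(0)$.)

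The only step requiring genuine care is the evaluation of $g(t_\pm)$ together with the matching of the resulting inequalities to conditions (1)--(3); in particular one must be attentive to which inequalities are strict — the strictness of $r_1<0$, $r_2>0$, $r_3<0$ is precisely what excludes non-positive roots, while the non-strict $r_2\le r_1^2/3$ and the closed two-sided inequality in (3) are what allow repeated roots.
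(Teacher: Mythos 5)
Your proof is correct, and it reaches the same inequalities as the paper by a noticeably different organization. The paper's proof is a two-line appeal to Robinson's method (as in Smyth): every derivative of a polynomial with all real positive roots again has all real positive roots, so one starts from $f''$, takes at each stage the antiderivative vanishing at $0$, and determines the admissible range of the constant of integration; conditions (1), (2), (3) are exactly the three stages of that process, with the final computation left implicit. You instead decouple ``all roots real'' from ``all roots positive'': reality is settled by the explicit critical-value computation for $g(t)=t^3+r_1t^2+r_2t$ — your formula $g(t_\pm)=\frac{2r_1^3-9r_1r_2\mp 2\delta^3}{27}$ is correct and, after multiplying the chain $g(t_+)\le -r_3\le g(t_-)$ by $-1$, gives precisely the two-sided inequality of (3) — while positivity is handled by Vieta in one direction and Descartes' rule applied to $-f(-t)$ in the other. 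The computational core (the critical values of the cubic with the constant term as free parameter) is the same in both arguments; what your version buys is a fully self-contained proof in which both implications are spelled out (the paper really only sketches the ``only if'' direction), together with the clean identity $r_1^2-3r_2=\frac12\sum_{i<j}(x_i-x_j)^2$ explaining condition (2); what it loses is the inductive structure that lets Robinson's method scale to higher degree. You are also right to flag the degenerate case $\delta=0$, where $g$ has no genuine local extrema but the collapsed inequality forces the triple root $t=-r_1/3$.

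One trivial slip in the converse: from ``$-f(-t)$ has no positive real root'' you should conclude $-x_i\le 0$, i.e.\ $x_i\ge 0$ (you wrote $x_i\le 0$); the final step, that $r_3<0$ excludes a zero root and hence $x_i>0$, then goes through as you state.
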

\begin{proof} If $f(t)$ has all real positive roots, so do all its derivates. Thus condition (1) is obvious. Let $f_0'(t)$ be the primitive of $f''(t)$ vanishing at $0$; if we add a constant to $f_0'(t)$ so that all its roots are real and positive, we obtain (2). Repeating this process with a primitive of $f'(t)$ vanishing at $0$, we obtain (3).
\end{proof}

Let $x=(x_1,x_2,x_3)\in\mathbb{C}^3$ and set
\begin{eqnarray} \label{polys}
p_x(t)=\prod_{i=1}^3(t^2+x_it+q)\mbox{, }
\end{eqnarray}
\begin{eqnarray*}
f_x(t)=\prod_{i=1}^3(t-(2\sqrt q+x_i))\quad\mbox{ and }\quad\widetilde{f}_x(t)=\prod_{i=1}^3(t-(2\sqrt q-x_i)).
\end{eqnarray*}

If $p_x(t)$ is a Weil polynomial (thus  $x_i=-(\omega_i+\overline{\omega_i})$, where $\omega_1,\overline{\omega_1},\dots ,\omega_g, \overline{\omega_g}$ are the roots of $p_x(t)$) then the roots of $f_x(t)$ and $\widetilde{f}_x(t)$ are real and positive. Conversely, suppose that the roots of $f_x(t)$ and $\widetilde{f}_x(t)$ are real and positive, then if $p_x(t)$ has integer coefficients and it is a Weil polynomial.

For $i=1,2,3$, let $a_i$ denote the coefficient associated to $p_x(t)$ in Proposition \ref{coeffweil}, $s_i$ the $i$th symmetric function of the $x_i$'s and $r_i$ and $\widetilde{r}_i$ the respective $i$th coefficients of $f_x(t)$ and $\widetilde{f}_x(t)$.

Expanding the expression of $p_x(t)$ in (\ref{polys}), we find
\begin{eqnarray*}
a_1 & = & s_1\\
a_2 & = & s_2+3q\\
a_3 & = & s_3+2qs_1.
\end{eqnarray*}
In the same way, expanding the expressions of $f_x(t)$ and $\widetilde{f}_x(t)$, we find
$$\begin{array}{lclclcl}
r_1 & = & -6\sqrt q-s_1                                      &                                          & \widetilde{r}_1 & = & -6\sqrt q+s_1\\
r_2 & = & 12q+4\sqrt qs_1+s_2                      & \quad\mbox{ and } \quad & \widetilde{r}_2 & = & 12q-4\sqrt qs_1+s_2\\
r_3 & = & -8q\sqrt q-4qs_1-2\sqrt qs_2-s_3 &                                  & \widetilde{r}_3 & = & -8q\sqrt q+4qs_1-2\sqrt qs_2+s_3.
\end{array}$$
Therefore we have
$$\begin{array}{lclclcl}
r_1 & = & -6\sqrt q-a_1                                      &                                          & \widetilde{r}_1 & = & -6\sqrt q+a_1\\
r_2 & = & 9q+4\sqrt qa_1+a_2                      & \quad\mbox{ and } \quad & \widetilde{r}_2 & = & 9q-4\sqrt qa_1+a_2\\
r_3 & = & -2q\sqrt q-2qa_1-2\sqrt qa_2-a_3 &                                  & \widetilde{r}_3 & = & -2q\sqrt q+2qa_1-2\sqrt qa_2+a_3.
\end{array}$$
The polynomials $f_x(t)$ and $\widetilde{f}_x(t)$ satisfy condition 1 of Lemma \ref{coeff} if and only if
$$\vert a_1\vert < 6\sqrt q.$$
The polynomials $f_x(t)$ and $\widetilde{f}_x(t)$ satisfy condition 2 of Lemma \ref{coeff} if and only if 
$$4\sqrt q\vert a_1\vert -9q < a_2\leq\frac{a_1^2}{3}+3q.$$
We find that the first inequality in condition 3 of Lemma \ref{coeff} holds for $f_x(t)$ if and only if it holds for $\widetilde{f}_x(t)$ if and only if
$$-\frac{2a_1^3}{27}+\frac{a_1a_2}{3}+qa_1-\frac{2}{27}(a_1^2-3a_2+9q)^{3/2}\leq a_3\leq -\frac{2a_1^3}{27}+\frac{a_1a_2}{3}+qa_1+\frac{2}{27}(a_1^2-3a_2+9q)^{3/2}.$$
Finally,  $f_x(t)$ and $\widetilde{f}_x(t)$ satisfy the second inequality in condition 3 of Lemma \ref{coeff} if and only if
$$-2qa_1-2\sqrt qa_2-2q\sqrt q < a_3< -2qa_1+2\sqrt qa_2+2q\sqrt q.$$
Hence Theorem \ref{coeffweil} is proved.


\section{Irreducible Weil polynomials}
Given a Weil polynomial $p(t)=\prod_{i=1}^g(t^2+x_it+q)$, we consider its real Weil polynomial $f(t)=\prod_{i=1}^g(t+x_i)$.
\begin{prop}Suppose that  $g\geq 2$ and $p(t)\neq (t-\sqrt{q})^2(t+\sqrt{q})^2$. Then $p(t)$ is irreducible over $\mathbb{Q}$ if and only if $f(t)$ is irreducible over $\mathbb{Q}$.\end{prop}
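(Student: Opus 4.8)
The plan is to base everything on the substitution $t\mapsto t+q/t$, which links the two polynomials through the identity
$$p(t)=t^{g}f\!\left(t+\frac{q}{t}\right),$$
obtained by writing $t^{g}\prod_{i=1}^{g}\bigl(t+q/t+x_i\bigr)=\prod_{i=1}^{g}\bigl(t^2+x_it+q\bigr)$. (We also use, as in the relations $a_1=s_1$, $a_2=s_2+3q$, $a_3=s_3+2qs_1$ above, that the coefficients of $f$ are integer polynomial expressions in $q$ and the coefficients of $p$, so $f\in\mathbb{Z}[t]$.) The identity has two consequences: a nonzero $\alpha$ is a root of $p$ if and only if $\alpha+q/\alpha$ is a root of $f$; and for any root $\theta$ of $f$, the two roots $\alpha,q/\alpha$ of $t^2-\theta t+q$ (note $-\theta$ is one of the $x_i$) are roots of $p$, hence $q$-Weil numbers, so $|\alpha|=\sqrt q$ and $q/\alpha=\overline{\alpha}$.

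For the implication ``$p$ irreducible $\Rightarrow$ $f$ irreducible'' I would argue by contraposition, and this half needs no restriction. If $f=f_1f_2$ over $\mathbb{Q}$ with $g_i:=\deg f_i\geq 1$, then $P_i(t):=t^{g_i}f_i(t+q/t)\in\mathbb{Q}[t]$ --- a monomial $y^{k}$ with $k\leq g_i$ contributes the honest polynomial $t^{g_i-k}(t^2+q)^{k}$ --- and $p=P_1P_2$ with $1\leq\deg P_i=2g_i<2g=\deg p$, a nontrivial factorization of $p$.

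For the converse, ``$f$ irreducible $\Rightarrow$ $p$ irreducible'', fix a root $\theta$ of $f$; since $f$ is a real Weil polynomial, $\theta$ is real, and irreducibility gives $[\mathbb{Q}(\theta):\mathbb{Q}]=g$. Pick a root $\alpha$ of $t^2-\theta t+q$; by the identity it is a root of $p$, and $[\mathbb{Q}(\theta,\alpha):\mathbb{Q}(\theta)]\leq 2$. The crux is to show this degree is exactly $2$: then $[\mathbb{Q}(\alpha):\mathbb{Q}]=2g=\deg p$, and since $p$ is monic with $\alpha$ as a root it is the minimal polynomial of $\alpha$, hence irreducible. Now $|\alpha|=\sqrt q$, so $\alpha\notin\mathbb{R}$ unless $\alpha=\pm\sqrt q$; as $\mathbb{Q}(\theta)\subset\mathbb{R}$, the only obstruction to $[\mathbb{Q}(\theta,\alpha):\mathbb{Q}(\theta)]=2$ is the possibility $\alpha=\pm\sqrt q$.

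Ruling out $\alpha=\pm\sqrt q$ is the single genuine difficulty, and it is exactly where the excluded polynomial surfaces. If $\alpha=\pm\sqrt q$, then $\theta=\alpha+q/\alpha=\pm2\sqrt q$ has degree at most $2$ over $\mathbb{Q}$; since $g=[\mathbb{Q}(\theta):\mathbb{Q}]\geq 2$, necessarily $q$ is not a square, $g=2$, and $f(t)=t^2-4q$. Then $\{x_1,x_2\}=\{-2\sqrt q,2\sqrt q\}$ and $p(t)=(t^2-2\sqrt q\,t+q)(t^2+2\sqrt q\,t+q)=(t^2+q)^2-4qt^2=(t^2-q)^2=(t-\sqrt q)^2(t+\sqrt q)^2$, which the hypothesis excludes. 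Hence under the standing assumptions $\alpha\neq\pm\sqrt q$, the degree computation is valid, and $p$ is irreducible.
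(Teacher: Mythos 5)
Your proof is correct, but the harder direction is handled by a genuinely different argument from the paper's. For ``$f$ reducible $\Rightarrow$ $p$ reducible'' you and the paper do essentially the same thing: a rational factorization $f=f_1f_2$ lifts to $p=P_1P_2$ by replacing each $t+x_i$ with $t^2+x_it+q$ (your $P_i(t)=t^{g_i}f_i(t+q/t)$ is exactly this), and no hypothesis is needed. For the other direction the paper argues contrapositively: writing $p=(t-\sqrt q)^{2k}(t+\sqrt q)^{2\ell}h(t)$ with $h$ having no real root, a case analysis on $k,\ell,h$ shows that every reducible Weil polynomial other than $(t-\sqrt q)^2(t+\sqrt q)^2$ splits as a product of two Weil polynomials over $\mathbb{Q}$, whose associated real polynomials then factor $f$. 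You instead prove ``$f$ irreducible $\Rightarrow$ $p$ irreducible'' directly by a field-degree count: for a root $\alpha$ of $p$ lying over a root $\theta$ of $f$ one has $\mathbb{Q}(\theta)\subset\mathbb{Q}(\alpha)$ and $[\mathbb{Q}(\alpha):\mathbb{Q}(\theta)]=2$ unless $\alpha$ is real, i.e.\ $\alpha=\pm\sqrt q$; you correctly show that this last possibility forces $g=2$, $q$ a non-square and $p=(t-\sqrt q)^2(t+\sqrt q)^2$, which is precisely the excluded polynomial, so $[\mathbb{Q}(\alpha):\mathbb{Q}]=2g$ and $p$ is the minimal polynomial of $\alpha$. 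Your route is shorter, avoids the case analysis, and localizes exactly where the hypotheses $g\geq 2$ and $p\neq(t-\sqrt q)^2(t+\sqrt q)^2$ are used; the paper's route yields the slightly stronger byproduct that a reducible $p$ (outside the excluded case) always factors into two Weil polynomials, which is what makes the correspondence with factorizations of $f$ transparent. Both arguments are complete.
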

\begin{proof}
Suppose that $p(t)$ is reducible. It is sufficient to prove that $p(t)$ factors as the product of two Weil polynomials (then $f(t)$ will be the product of its associated polynomials). The polynomial $p(t)$ decomposes as $p(t)=(t-\sqrt{q})^{2k}(t+\sqrt{q})^{2\ell}h(t)$ where $h(t)$ has no real root. If $k\neq\ell$, $\sqrt{q}\in \mathbb{Q}$ and $p(t)$ factors obviously. The same conclusion holds when $k=\ell\neq 0$ and $h(t)\neq 1$. If $k=\ell >1$ and $h(t)=1$, we have the decomposition $p(t)=[(t-\sqrt{q})^{2}(t+\sqrt{q})^{2}][(t-\sqrt{q})^{2k-2}(t+\sqrt{q})^{2\ell -2}]$. Finally, if $k=\ell =0$, by hypothesis $h(t)$ is the product of two monic non-constant polynomials which are obviously Weil polynomials.

Conversely, if $f(t)$ is reducible, we can assume (possibly  changing labels of the $x_i$'s) that there exists an integer $k$ between $1$ and $(g-1)$ such that the polynomials $\prod_{i=1}^k(t+x_i)$ and $\prod_{i=k+1}^g(t+x_i)$ have integer coefficients. Thus $\prod_{i=1}^k(t^2+x_it+q)$ and $\prod_{i=k+1}^g(t^2+x_it+q)$ have integer coefficients and their product is $p(t)$.
\end{proof}

Now we focus on the case $g=3$. In order to know if $p(t)$ is irreducible, it is sufficient to check if $f(t)$ (a polynomial of degree $3$ with all real roots) is irreducible. To do this, we use Cardan's method. Let us recall quickly what it is. 

Fixing a polynomial $h(t)=t^3+rt+s$, we set $\Delta=s^2-\frac{4}{27}r^3$. If $h(t)$ has all real roots, we have $\Delta \leq 0$. Moreover, $\Delta =0$ if and only if $h(t)$ has a double root. When $\Delta <0$, we set $u=\frac{-s+\sqrt{\Delta}}{2}$. The roots of $h(t)$ are in the form $(v+\overline{v})$ where $v$ is a cube root of $u$.

We apply this to $f(t)=t^3+a_1t^2+(a_2-3q)t+(a_3-2qa_1)$:

\begin{proof}[Proof of proposition \ref{irred}]
We set $h(t)=t^3+rt+s$ so that $f(t)=h(t+\frac{a_1}{3})$. The polynomial $f(t)$ is reducible if and only if it has a root in $\mathbb{Q}$ if and only if $h(t)$ has a root in $\mathbb{Q}$.

If $\Delta =0$, $f(t)$ is reducible. Suppose that $\Delta <0$. 
If $u$ is the cube of a certain $v\in\mathbb{Q}(\sqrt{\Delta})$, we have obviously $(v+\overline{v})\in\mathbb{Q}$. 
Conversely, if $h(t)$ has a root in $\mathbb{Q}$ then $u$ has a cube root $v=a+ib$ with $a\in\mathbb{Q}$ and we have
$$u=v^3=(a^3-3ab^2)+ib(3a^2b-b^2).$$
If $a\neq 0$, identifying real parts in the last equality, we see that $b^2\in \mathbb{Q}$, then, identifying imaginary parts, $b\in\mathbb{Q}(\sqrt{-\Delta})$. Therefore $v\in\mathbb{Q}(\sqrt{\Delta})$.
If $a=0$, then $s=0$ and $\Delta =\frac{4}{27}r^3=(\frac{2}{3}r)^2\frac{r}{3}$. Thus $u=\frac{1}{2}\sqrt{\frac{4}{27}r^3}=(\sqrt{\frac{r}{3}})^3$ is a cube in $\mathbb{Q}(\sqrt{\Delta})=\mathbb{Q}(\sqrt{\frac{r}{3}})$.

\end{proof}

\section{Newton polygons}
Let $p(t)$ be an irreducible Weil polynomial of degree $3$ and $e$ the least common denominator of $v_p(f(0))/n$ where $f(t)$ runs through the irreducible factors of $p(t)$ over $\mathbb{Q}_p$ (the field of $p$-adic numbers). By \cite{milwat}, $p(t)^e$ is the characteristic polynomial of a simple abelian variety. Thus $p(t)$ is the characteristic polynomial of an abelian variety of dimension $3$ if and only if  $e$ is equal to $1$ that is, $v_p(f(0))/n$ are integers. One way to obtain information about  $p$-adic valuations of the roots of $p(t)$ is to study its Newton polygon (see \cite{weiss}). The condition "$v_p(f(0))/n$ are integers" implies that the projection onto the $x$-axis of an edge of the Newton polygon having a slope $\ell n/k$ (with $\mbox{pgcd}(\ell ,k)=1$) has length a multiple of $k$. We graph the Newton polygons satisfying this condition and in each case, we give a necessary and sufficient condition to have $e=1$. The obtained results are summarized in Theorem \ref{newt}.

\bigskip
\noindent
\textbf{Ordinary case: }$\mathbf{v_p(a_3)=0}$

The Newton polygon of $p(t)$ is represented in Figure \ref{ordinaire} and we always have $e=1$.
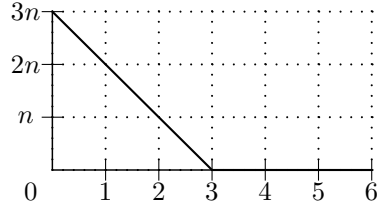
\begin{figure}[h]
   \centering
    \begin{pspicture}(0,0)(6,3)
       \psset{unit=7mm,labels=none}
  \rput(-0.4,-0.4){$0$}
  \rput(-0.5,1){$n$}
  \rput(-0.5,2){$2n$}
  \rput(-0.5,3){$3n$}
  \rput(1,-0.4){$1$}
    \rput(2,-0.4){$2$}
    \rput(3,-0.4){$3$}
   \rput(4,-0.4){$4$}
    \rput(5,-0.4){$5$}
    \rput(6,-0.4){$6$}
   \psaxes[linewidth=.7\pslinewidth]{-}(0,0)(0,0)(6,3)
  \psgrid[griddots=5, subgriddiv=0, gridlabels=0pt](0,0)(6,3)
   \psline(0,3)(3,0)(6,0)
\end{pspicture}
 \caption{\label{ordinaire}Ordinary case}
\end{figure}

\bigskip
\noindent
\textbf{p-rank 2 case: }$\mathbf{v_p(a_3)>0}$\textbf{ and }$\mathbf{v_p(a_2)=0}$

The only Newton polygon for which $e=1$ is represented in Figure \ref{prang2}.
\begin{figure}[h]
   \centering
    \begin{pspicture}(0,0)(6,3)
    \psset{unit=7mm,labels=none}
  \rput(-0.4,-0.4){$0$}
  \rput(-0.5,1){$n$}
  \rput(-0.5,2){$2n$}
  \rput(-0.5,3){$3n$}
  \rput(1,-0.4){$1$}
    \rput(2,-0.4){$2$}
    \rput(3,-0.4){$3$}
   \rput(4,-0.4){$4$}
    \rput(5,-0.4){$5$}
    \rput(6,-0.4){$6$}
   \psaxes[linewidth=.7\pslinewidth]{-}(0,0)(0,0)(6,3)
  \psgrid[griddots=5, subgriddiv=0, gridlabels=0pt](0,0)(6,3)
   \psline(0,3)(2,1)(4,0)(6,0)
\end{pspicture}
 \caption{\label{prang2}$p$-rank $2$ case}
\end{figure}
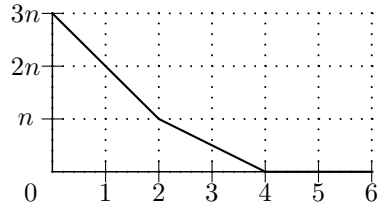

This is the Newton polygon of  $p(t)$ if and only if $v_p(a_3)\geq n/2$. If this condition holds, $p(t)$ has a factor  in $\mathbb{Q}_p$ of degree $2$ with roots of valuation $n/2$ and thus $e=1$ if and only if this factor is irreducible, that is, if and only if $p(t)$ has no root of valuation $n/2$ in $\mathbb{Q}_p$ (note that when $n$ is odd, this last condition always holds).

\bigskip
\noindent
\textbf{p-rank 1 case: }$\mathbf{v_p(a_3)>0}$\textbf{, }$\mathbf{v_p(a_2)>0}$\textbf{ and }$\mathbf{v_p(a_1)=0}$

The only Newton polygon for which $e=1$ is represented in Figure \ref{prang1}.
\begin{figure}[h]
   \centering
   \begin{pspicture}(0,0)(6,3)
   \psset{unit=7mm,labels=none}
  \rput(-0.4,-0.4){$0$}
  \rput(-0.5,1){$n$}
  \rput(-0.5,2){$2n$}
  \rput(-0.5,3){$3n$}
  \rput(1,-0.4){$1$}
    \rput(2,-0.4){$2$}
    \rput(3,-0.4){$3$}
   \rput(4,-0.4){$4$}
    \rput(5,-0.4){$5$}
    \rput(6,-0.4){$6$}
   \psaxes[linewidth=.7\pslinewidth]{-}(0,0)(0,0)(6,3)
  \psgrid[griddots=5, subgriddiv=0, gridlabels=0pt](0,0)(6,3)
   \psline(0,3)(1,2)(5,0)(6,0)
\end{pspicture}
 \caption{\label{prang1}$p$-rank $1$ case}
\end{figure}
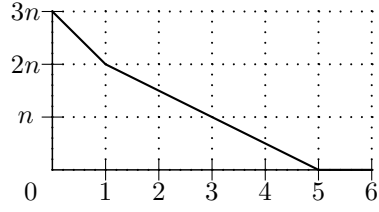

This is the Newton polygon of  $p(t)$ if and only if $v_p(a_2)\geq n/2$ and $v_p(a_3)\geq n$. If these conditions hold, $p(t)$ has a factor  in $\mathbb{Q}_p$ of degree $4$ with roots of valuation $n/2$ and thus $e=1$ if and only if this factor has no root in $\mathbb{Q}_p$, that is, if and only if $p(t)$ has no root of valuation $n/2$ in $\mathbb{Q}_p$.

\bigskip
\noindent
\textbf{p-rank 0 case: }$\mathbf{v_p(a_3)>0}$\textbf{, }$\mathbf{v_p(a_2)>0}$\textbf{ and }$\mathbf{v_p(a_1)>0}$

There are two Newton polygons for which $e=1$. One is represented in Figure \ref{type13}.
\begin{figure}[h]
   \centering
    \begin{pspicture}(0,0)(6,3)
     \psset{unit=7mm,labels=none}
  \rput(-0.4,-0.4){$0$}
  \rput(-0.5,1){$n$}
  \rput(-0.5,2){$2n$}
  \rput(-0.5,3){$3n$}
  \rput(1,-0.4){$1$}
    \rput(2,-0.4){$2$}
    \rput(3,-0.4){$3$}
   \rput(4,-0.4){$4$}
    \rput(5,-0.4){$5$}
    \rput(6,-0.4){$6$}
   \psaxes[linewidth=.7\pslinewidth]{-}(0,0)(0,0)(6,3)
  \psgrid[griddots=5, subgriddiv=0, gridlabels=0pt](0,0)(6,3)
   \psline(0,3)(3,1)(6,0)
\end{pspicture}
 \caption{\label{type13}Type $1/3$ case}
\end{figure}
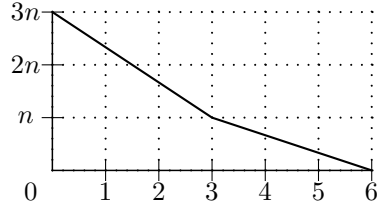

This is the Newton polygon of  $p(t)$ if and only if $v_p(a_1)\geq n/3$, $v_p(a_2)\geq 2n/3$ and $v_p(a_3)=n$.  If these conditions hold, $p(t)$ has two factors  in $\mathbb{Q}_p$ of degree $3$, one with roots of valuation $2n/3$ and the other with roots of valuation $n/3$;  $e=1$ if and only if those factors are irreducible in $\mathbb{Q}_p$, that is, if and only if $p(t)$ has no root in $\mathbb{Q}_p$.

\medskip

The other Newton polygon is represented in Figure \ref{super}; the corresponding abelian varieties are supersingular.
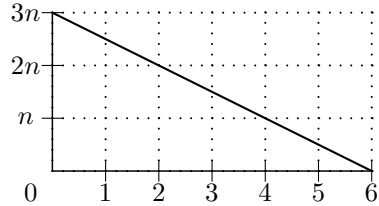
\begin{figure}[h]
  \centering
    \begin{pspicture}(0,0)(6,3)
     \psset{unit=7mm,labels=none}
  \rput(-0.4,-0.4){$0$}
  \rput(-0.5,1){$n$}
  \rput(-0.5,2){$2n$}
  \rput(-0.5,3){$3n$}
  \rput(1,-0.4){$1$}
    \rput(2,-0.4){$2$}
    \rput(3,-0.4){$3$}
   \rput(4,-0.4){$4$}
    \rput(5,-0.4){$5$}
    \rput(6,-0.4){$6$}
   \psaxes[linewidth=.7\pslinewidth]{-}(0,0)(0,0)(6,3)
  \psgrid[griddots=5, subgriddiv=0, gridlabels=0pt](0,0)(6,3)
   \psline(0,3)(6,0)
\end{pspicture}
 \caption{\label{super}Supersingular case}
\end{figure}

This is the Newton polygon of  $p(t)$ if and only if $v_p(a_1)\geq n/2$, $v_p(a_2)\geq n$ and $v_p(a_3)\geq 3n/2$.  If these conditions hold, $e=1$ if and only if $p(t)$ has no root nor factor of degree $3$ in $\mathbb{Q}_p$.


\section{Supersingular case}
Nart and Ritzenthaler \cite{nr} proved that the only supersingular $q$-Weil numbers of degree six are
$$\begin{array}{ll}
\pm\sqrt{q}\zeta_7,\quad\pm\sqrt{q}\zeta_9,                                 &    \quad\mbox{if }q\mbox{ is a square,} \\
\sqrt{q}\zeta_{28}\:(p=7),\quad \sqrt{q}\zeta_{36}\:(p=3),            &    \quad\mbox{if }q\mbox{ is not a square,} 
\end{array}$$
where $\zeta_n$ is a primitive $n$th root of unity.

We will use this result to obtain a list of possible supersingular characteristic polynomials as stated in Proposition \ref{supers}. We will have to calculate the minimal polynomial of some algebraic integers; in order to do this, we will often use the (trivial) fact that if $\alpha$ is a root of $f(t)=\sum_{i=0}^nb_it^{n-i}$ and $a\in\mathbb{C}$ then $a\alpha$ is a root of $f_a(t)=\sum_{i=0}^nb_ia^it^{n-i}$. We denote by $\phi_n(t)$ the $n$th cyclotomic polynomial.

\bigskip
\noindent
$\bullet$ If $q$ is a square, as $\phi_7(t)=t^6+t^5+t^4+t^3+t^2+t+1$,  the minimal polynomial of $\sqrt{q}\zeta_7$ (respectively $-\sqrt{q}\zeta_7$) is $p(t)=t^6+q^{1/2}t^5+qt^4+q^{3/2}t^3+q^2t^2+q^{5/2}t+q^3$ (resp. $p(t)=t^6-q^{1/2}t^5+qt^4-q^{3/2}t^3+q^2t^2-q^{5/2}t+q^3$). If $p\neq 7$, $p(t)$ has no factor of degree $1$ and $3$ over $\mathbb{Q}_p$ if and only if $\mathbb{Q}_p$ and its cubic extensions do not contain a $7$th primitive root of unity; this is equivalent  (see \cite[Proposition 2.4.1., p.53]{ami}) to
$$7\not\vert (p^3-1).$$

In the same way, $\phi_9(t)=t^6+t^3+1$ and the minimal polynomial of $\sqrt{q}\zeta_9$ (respectively  $-\sqrt{q}\zeta_9$) is $p(t)=t^6+q^{3/2}t^3+q^3$ (resp. $p(t)=t^6-q^{3/2}t^3+q^3$). If $p\neq 3$, $p(t)$ have no factor of degree $1$ and $3$ over $\mathbb{Q}_p$ if and only if 
$$3\not\vert (p-1).$$

If $p=7$ in the first case or $p=3$ in the second case, $p(t)$ is irreducible over $\mathbb{Q}_p$ (apply Eisenstein's Criterion to $p(t+1)$).

\bigskip
\noindent
$\bullet$ Suppose that $q$ is not a square. When $p=7$, as $\phi_{28}(t)=t^{12}-t^{10}+t^8-t^6+t^4-t^2+1$, the monic polynomial with roots $\sqrt{q}\zeta_{28}$ is $t^{12}-qt^{10}+q^2t^8-q^3t^6+q^4t^4-q^5t^2+q^6$ which is the product of
$$t^6+\sqrt{pq}t^5+3qt^4+q\sqrt{pq}t^3+3q^2t^2+q^2\sqrt{pq}t+q^3$$
and
$$t^6-\sqrt{pq}t^5+3qt^4-q\sqrt{pq}t^3+3q^2t^2-q^2\sqrt{pq}t+q^3.$$

 When $p=3$, as $\phi_{36}(t)=t^{12}-t^6+1$, the monic polynomial with roots $\sqrt{q}\zeta_{36}$ is $t^{12}-q^3t^6+q^6$ which is the product of
$$t^6+q\sqrt{pq}t^3+q^3$$
and
$$t^6-q\sqrt{pq}t^3+q^3.$$

The resulting polynomials are characteristic polynomials of abelian varieties of dimension $3$ (see \cite{nr}).


\section*{Acknowledgements}
I would like to thank Christophe Ritzenthaler for suggesting this work and for helpful discussions. I would also like to thank my thesis advisor, Yves Aubry, for his helpful comments and Hamish Ivey-Law for his careful reading of this paper.


\end{document}